\documentclass[sn-mathphys,Numbered]{sn-jnl}

\usepackage{graphicx}%
\usepackage{multirow}%
\usepackage{amsmath,amssymb,amsfonts}%
\usepackage{amsthm}%
\usepackage{mathrsfs}%
\usepackage[title]{appendix}%
\usepackage{xcolor}%
\usepackage{textcomp}%
\usepackage{manyfoot}%
\usepackage{booktabs}%
\usepackage{algorithm}%
\usepackage{algorithmicx}%
\usepackage{algpseudocode}%
\usepackage{listings}%

\theoremstyle{thmstyleone}%
\newtheorem{theorem}{Theorem}
\newtheorem{proposition}[theorem]{Proposition}%

\theoremstyle{thmstyletwo}%
\newtheorem{corollary}{Corollary}%
\newtheorem{lemma}{Lemma}%

\theoremstyle{thmstylethree}%

\begin{document}

\title[Article Title]{Tests for the mean of high-dimensional data}


\author{\fnm{Dietmar} \sur{Ferger}}\email{dietmar.ferger@tu-dresden.de}



\affil{\orgdiv{Fakult\"{a}t Mathematik}, \orgname{Technische Universit\"{a}t Dresden}, \orgaddress{\street{Zellescher Weg 12-14}, \city{Dresden}, \postcode{01069}, \country{Germany}}}




\abstract{We consider the problem of testing the mean of high-dimensional data when the dimension may grow without explicit rate restrictions relative to the sample size. The proposed procedure is based on the statistic $V_n=n||\overline{X}_n||^2$,
which avoids inversion of the covariance matrix and is therefore suitable for high-dimensional settings. We establish asymptotic distributional results for both fixed and increasing dimension by embedding the observations into the Hilbert space $l^2$.
Furthermore, we prove the asymptotic validity of a bootstrap approximation for the distribution of the test statistic. The resulting bootstrap test yields asymptotic level-a procedures without requiring sparsity assumptions or structural conditions on the covariance matrix.
In all this, a new Central Limit Theorem in $l^2$ is proving to be an extremely useful tool.}

\keywords{high-dimensional data; hypothesis testing; mean vector; bootstrap; central limit theorem in Hilbert space $l^2$; triangular arrays; asymptotic inference.}


\pacs[MSC Classification]{62H15; 62G09; 60F05; 60B12; 62E20.}

\maketitle

\section{Introduction}
The problem of testing hypotheses on high-dimensional mean vectors has attracted considerable attention in recent years, motivated by applications in fields such as genomics, finance, and signal processing. In modern data-sets, the dimension of the observations is often comparable to or larger than the sample size, which renders classical multivariate procedures unreliable. In particular, Hotelling's $T^2$
-test is no longer applicable when the sample covariance matrix becomes singular, a phenomenon that occurs as soon as the dimension exceeds the sample size. To overcome this difficulty, a large body of literature has developed alternative procedures that avoid explicit inversion of the covariance matrix. A seminal contribution is due to Bai and Saranadasa \cite{Bai}, who proposed a test based on the squared Euclidean norm of the sample mean. This idea has been further developed by Chen and Qin \cite{ChenQin} and many subsequent authors (e.g., Srivastava and Du \cite{SrivastavaDu}, Zhang et al. \cite{Zhang}), leading to a class of $L_2$-type tests that are particularly powerful under dense alternatives.

Complementary approaches are based on maximum-type statistics and multiple testing ideas, which are well suited for sparse alternatives. Notable examples include the procedures of Cai, Liu and Xia \cite{Cai} and Xue and Yao \cite{XueYao}, as well as higher criticism methods introduced by Donoho and Jin \cite{DonohoJin}. Adaptive procedures that combine the advantages of different test statistics have also been proposed (e.g., Xu et al. \cite{Xu}, He et al. \cite{He}), reflecting the fact that no single method is uniformly optimal across different alternatives.
Further developments include projection-based tests that incorporate dependence structures (Lopes et al. \cite{Lopes} , Thulin \cite{Thulin}, Huang \cite{Huang}), as well as nonparametric and robust methods based on spatial signs, ranks, or U-statistics (Chakraborty and Chaudhuri \cite{Chakraborty}, He et al. \cite{He}). Despite this diversity, most existing approaches rely on specific structural assumptions, such as sparsity, dependence patterns, or explicit growth conditions relating the dimension d to the sample size n.\\

In contrast to these frameworks for two-sample problems, we consider for the one-sample problem an asymptotic regime in which the dimension $d_n$ is allowed to diverge without imposing explicit rate conditions relative to the sample size. Our approach is based on the test statistic
$$
 V_n := n ||\overline{X_n}||^2,
$$
which arises naturally from the Euclidean norm of the sample mean. In fixed dimension, the asymptotic distribution of $V_n$
follows from the multivariate Central Limit Theorem and can be expressed as a weighted sum of independent $\chi_1^2$-random variables. However, this representation depends on the eigenvalues of the covariance matrix, which are typically unknown and difficult to estimate in high-dimensional settings.\\

To overcome this difficulty, we employ bootstrap methods to approximate the distribution of the test statistic. Our analysis proceeds in two steps. First, in Lemma \ref{convVn} we consider the finite-dimensional case and establish a triangular array version of the Central Limit Theorem for the statistic $V_n$.
In particular, it forms the basis for subsequent results. Building on this, Theorem 1 establishes the consistency of the bootstrap approximation in fixed dimension.
In a second step, we extend the analysis to the high-dimensional setting where $d_n \rightarrow \infty.$
To this end, we embed the observations into the Hilbert space $l^2$
and interpret the finite-dimensional vectors as truncations of an underlying infinite-dimensional random element. Within this framework, Proposition \ref{highdimensions} establishes weak convergence of $V_n=V_n(d_n)$ under suitable second-moment and covariance conditions. The key technical result is Proposition \ref{CLTl2arrays}, which provides a new central limit theorem for triangular arrays of $l^2$-valued random variables. This result can be viewed as a concrete and verifiable version of abstract central limit theorems for Banach space-valued random variables, tailored to the structure of $l^2$.

Finally, we combine these results to establish the asymptotic validity of the bootstrap in the high-dimensional setting. This yields a practical procedure for constructing tests based on $V_n(d_n)$ without requiring explicit estimation of the eigenvalues of the covariance operator. Our results thus provide an alternative approach to high-dimensional mean testing that avoids sparsity assumptions and explicit rate conditions, and instead exploits the underlying Hilbert space structure of the data. As for $d$ finite (Proposition \ref{levelalphatestdfinite}) we obtain asymptotic level-$\alpha$ tests (Proposition \ref{levelalphatestdinfinite}).

\section{The case $d$ is finite}
For each $n \in \mathbb{N}$ let $X_1,\ldots,X_n$ be independent and identically distributed (i.i.d.) random vectors in the euclidian space $\mathbb{R}^d$
defined on a common probability space $(\Omega,\mathcal{A},\mathbb{P})$. If $\mu:=\mathbb{E}[X_1]$, then we want to test the hypothesis $H_0:\mu=\mu_0$ against the alternative $H_1:\mu \neq \mu_0$ for some given $\mu_0 \in \mathbb{R}^d$. Since $\mu_0$ is known, we can w.l.o.g. assume that $\mu_0=0$, because otherwise transform the data into the centered $X_i-\mu_0$.
Notice that $\mu=0$ or $\mu \neq 0$ is equivalent to $||\mu||^2 = 0$ or $||\mu||^2>0$, respectively.
Here, $||x||:=\sqrt{x^\prime x}$ denotes the euclidian norm of a column-vector $x \in \mathbb{R}^d$, where the superscript $\prime$ denotes matrix transposition. Since the arithmetic
mean $\overline{X}_n :=n^{-1}\sum_{i=1}^n X_i$ is an unbiased and consistent estimator for $\mu$, it is reasonable to reject the hypothesis $H_0$ for large values of $||\overline{X}_n||^2$. Assume that $\mathbb{E}[||X_1||^2] < \infty.$ Then the multivariate Central Limit Theorem (CLT) says that under $H_0$
\begin{equation} \label{CLT}
   \frac{1}{\sqrt{n}}\sum_{i=1}^n X_i \stackrel{\mathcal{D}}{\rightarrow} Z \sim N(0,\Gamma)\quad \text{in } \mathbb{R}^d,
\end{equation}
where $\Gamma = \text{Cov}(X_1)$ is the covariance matrix of $X_1$. Thus
\begin{equation} \label{VCLT}
 V_n := n ||\overline{X_n}||^2=||\frac{1}{\sqrt{n}} \sum_{i=1}^n X_i||^2 \stackrel{\mathcal{D}}{\rightarrow} ||Z||^2 =:V
\end{equation}
by continuity of the norm $||\cdot||$ and the Continuous Mapping Theorem (CMT).
As to the limit variable $V$ in (\ref{VCLT}) it is well known that
$$
 V \stackrel{\mathcal{D}}{=} \sum_{i=1}^d \lambda_i N_i^2,
$$
where $\lambda_1,\ldots,\lambda_d$ are the eigenvalues of $\Gamma$ and $N_1,\ldots,N_d$ are i.i.d. standard normal random variables.
In particular, the distribution function $F$ of $V$ is continuous. Therefore,
by P\'{o}lya's theorem the distributional convergence (\ref{VCLT}) is equivalent to
\begin{equation} \label{uniformVn}
 \sup_{x \in \mathbb{R}}|\mathbb{P}(V_n \le x)-F(x)| \rightarrow 0, \;  n \rightarrow \infty.
\end{equation}

Now, we reject $H_0$, if $V_n > c_\alpha$, where $c_\alpha$ is the $(1-\alpha)$-quantile of $F$ with $\alpha \in (0,1)$. It follows from (\ref{VCLT}) and continuity of $F$ that this is an asymptotic level-$\alpha$ test. Even though we have a nice representation of the limit variable $V$ it is unknown to the statistician. One obvious way would be to estimate the unknown covariance matrix $\Gamma$ by the sample covariance matrix, say $\Gamma_n$, and then approximate the $\lambda_i$ by the eigenvalues $\lambda_{n,i}$ of $\Gamma_n$. This leads to
$\sum_{i=}^d \lambda_{n,i} N_i^2$ and the $(1-\alpha)$-quantiles thereof as reasonable estimates for the critical values $c_\alpha$. However,
when $p$ is very large, the procedure becomes intractable (curse of high dimension). As a way out we will approximate the critical values by a
resampling method (Efron's bootstrap). A basic tool for showing that our \emph{bootstrap works} (see Corollary \ref{bootworks} below) is the following lemma. It follows, when in our above derivation the CLT (\ref{CLT}) for sequences is replaced by the CLT of Lindeberg-Feller for arrays of random vectors.\\

\begin{lemma} \label{convVn} For every $n \in \mathbb{N}$ let $X_{n1},\ldots,X_{nn}$ be i.i.d. random vectors in $\mathbb{R}^d$ with
$\mathbb{E}[X_{n1}]=0$ and $\mathbb{E}[||X_{n1}||^2]< \infty$. If
\begin{equation}
 \text{Cov}[X_{n1}] \rightarrow \Gamma
\end{equation}
and
\begin{equation} \label{Lindebergconditionwithrationals}
 L_n(\epsilon) := \mathbb{E}[||X_{n1}||^2 1_{\{||X_{n1}||> \sqrt{n} \epsilon\}}] \rightarrow 0 \quad \text{for all positive rational }  \epsilon,
\end{equation}
then $\overline{X}_n := n^{-1} \sum_{i=1}^n X_{ni}$ satisfies the following limit-law:
$$
 V_n:= n ||\overline{X}_n||^2 \stackrel{\mathcal{D}}{\rightarrow} V=\sum_{i=1}^p \lambda_i N_i^2.
$$
Here, $\lambda_1,\dots,\lambda_d$ are the eigenvalues of $\Gamma$ and $N_1,\ldots,N_d$ are i.i.d. standard normal. As we know
this is equivalent to
$$
  \sup_{x \in \mathbb{R}}|\mathbb{P}(V_n \le x)-F(x)| \rightarrow 0, \;  n \rightarrow \infty,
$$
where $F$ is the distribution function of $V$.
\end{lemma}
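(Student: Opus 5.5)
The plan is to apply the multivariate Lindeberg--Feller CLT to the row sums $S_n:=n^{-1/2}\sum_{i=1}^n X_{ni}$, conclude $S_n \stackrel{\mathcal{D}}{\rightarrow} Z\sim N(0,\Gamma)$, and then argue exactly as in (\ref{VCLT}). To reduce to the scalar case, I will use the Cram\'er--Wold device. Fix $t\in\mathbb{R}^d$ and put $Y_{ni}:=n^{-1/2}t'X_{ni}$. These are independent within each row, centered, and have
$$
\sum_{i=1}^n \mathbb{E}[Y_{ni}^2]=t'\,\text{Cov}[X_{n1}]\,t \rightarrow t'\Gamma t .
$$
If $t'\Gamma t=0$, Chebyshev's inequality gives $t'S_n\to 0$ in probability, which is the required limit. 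Otherwise the one-dimensional Lindeberg--Feller theorem yields $t'S_n \stackrel{\mathcal{D}}{\rightarrow} N(0,t'\Gamma t)$, once Lindeberg's condition is verified.

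For Lindeberg's condition I use $|t'x|\le \|t\|\,\|x\|$. For any $\eta>0$ this gives
$$
\sum_{i=1}^n \mathbb{E}\bigl[Y_{ni}^2 1_{\{|Y_{ni}|>\eta\}}\bigr]\le \|t\|^2\,\mathbb{E}\bigl[\|X_{n1}\|^2 1_{\{\|X_{n1}\|>\sqrt{n}\,\eta/\|t\|\}}\bigr]=\|t\|^2 L_n(\eta/\|t\|).
$$
The only subtle point is that (\ref{Lindebergconditionwithrationals}) is assumed only for rational $\epsilon$. Since $\epsilon\mapsto L_n(\epsilon)$ is nonincreasing, I pick a rational $q$ with $0<q\le \eta/\|t\|$. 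Then $L_n(\eta/\|t\|)\le L_n(q)\to0$, so $L_n(\epsilon)\to 0$ holds for every real $\epsilon>0$. Cram\'er--Wold now gives $S_n\stackrel{\mathcal{D}}{\rightarrow}Z$ in $\mathbb{R}^d$.

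Next, $V_n=\|S_n\|^2$, so the Continuous Mapping Theorem gives $V_n\stackrel{\mathcal{D}}{\rightarrow}\|Z\|^2$. To identify the limit, write the spectral decomposition $\Gamma=P\Lambda P'$ with $P$ orthogonal. Then $P'Z\sim N(0,\Lambda)$ has independent coordinates $\sqrt{\lambda_i}N_i$, and $\|Z\|^2=\|P'Z\|^2=\sum_i\lambda_i N_i^2$.

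For the uniform version, note that $\Gamma$ is positive semidefinite as a limit of covariance matrices. If $\Gamma\neq 0$, $V$ has a continuous distribution function $F$, because it is a positive combination of independent $\chi^2_1$ variables and one of these already has a density. P\'olya's theorem then gives uniform convergence. In the degenerate case $\Gamma=0$ we have $V=0$ and $F$ is not continuous, so the sup-norm equivalence must be read with $\Gamma\neq0$ implicitly assumed, as in the paper's remark after (\ref{VCLT}).

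I expect no real obstacle. The points needing care are the passage from rational to arbitrary $\epsilon$ in the Lindeberg condition, and the degenerate directions with $t'\Gamma t=0$.
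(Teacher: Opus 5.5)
Your proof is correct and follows the same route as the paper: the paper replaces the classical CLT in (\ref{CLT}) by the multivariate Lindeberg--Feller CLT for arrays and then repeats the CMT/eigenvalue/P\'olya argument, and your Cram\'er--Wold reduction simply unpacks that multivariate theorem. Your monotonicity argument for passing from rational to arbitrary $\epsilon>0$ is a cleaner justification than the paper's appeal to ``the proof of the Lindeberg--Feller CLT.'' Your caveat that the uniform version needs $\Gamma\neq 0$, so that $F$ is continuous, is a genuine correction, since the paper asserts continuity of $F$ without that exception.
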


\vspace{0.5cm}
The proof of the Lindeberg-Feller CLT shows that the Lindeberg-condition (\ref{Lindebergconditionwithrationals}) with \emph{rational} $\epsilon >0$  is sufficient. In the next section we will use countability of the rational numbers $\mathbb{Q}.$

\section{The bootstrap when $d$ is finite}
For every $n \in \mathbb{N}$ let $i_{n1},\ldots,i_{nn}$ be random variables defined on some probability space $(\Omega^*,\mathcal{A}^*,\mathbb{P}^*)$,
which are $\mathbb{P}^*$-independent and each of these is uniformly distributed on $\{1,\ldots,n\}$, i.e. for every $1 \le k \le n$, the following holds:
$$
  \mathbb{P}^*(\{\omega^* \in \Omega^*: i_{nk}(\omega^*)=j\})=\frac{1}{n} \quad \forall \; j \in \{1,\ldots,n\}.
$$
For each sample point $\omega \in \Omega$ we define random variables $\xi_{nk}(\omega):\Omega^* \rightarrow \mathbb{R}^d$ by
$\xi_{nk}(\omega)(\omega^*):=X_{i_{nk}(\omega^*)}(\omega), \omega^* \in \Omega^*,$ for all $1 \le k \le n.$ A straightforward calculation shows that
$\xi_{n1}(\omega),\ldots,\xi_{nn}(\omega)$ are $\mathbb{P}^*$-independent with common distribution $Q_n(\omega)$, where
\begin{equation} \label{defQn}
 Q_n(\omega):= n^{-1} \sum_{i=1}^n \delta_{X_i(\omega)}
\end{equation}
is the empirical measure pertaining to the realizations $X_1(\omega),\ldots,X_n(\omega)$. Here, $\delta_x$ denotes the Dirac-measure at point $x \in \mathbb{R}^d$. In particular, the following applies:
\begin{equation} \label{distribbootstrap}
\mathbb{P}^*\circ (\xi_{n1}(\omega),\ldots,\xi_{nn}(\omega))^{-1}=Q_n(\omega)\otimes \cdots \otimes Q_n(\omega).
\end{equation}

If $\overline{X}_n(\omega):= n^{-1} \sum_{i=1}^n X_i(\omega)$, then
with $\overline{\xi}_n(\omega):= n^{-1} \sum_{i=1}^n (\xi_{ni}(\omega)- \overline{X}_n(\omega))$
the bootstrap version of $V_n$ is given by
$$
 V_n^*(\omega):= n ||\overline{\xi}_n(\omega)||^2.
$$
Recall that for each $\omega \in \Omega$ the test-statistic $V_n^*(\omega)$ is a random variable defined on the probability space $(\Omega^*,\mathcal{A}^*,\mathbb{P}^*)$ with possible values $V_n^*(\omega)(\omega^*), \omega^* \in \Omega^*$.\\

\begin{theorem} \label{bootstrapfinite} Suppose $X_i, i \in \mathbb{N},$ are i.i.d. random variables with values in $\mathbb{R}^d$ such that
$\mathbb{E}[X_1]=0$ and  $\mathbb{E}[||X_1||^2] < \infty$. Then
\begin{equation} \label{bootstrapconv}
 V_n^*(\omega)= n ||\overline{\xi}_n(\omega)||^2 \stackrel{\mathcal{D}}{\rightarrow} V=\sum_{i=1}^d \lambda_i N_i^2 \quad \text{for } \mathbb{P}\text{-almost all } \omega \in \Omega.
\end{equation}
Here, $\lambda_1,\dots,\lambda_d$ are the eigenvalues of $\Gamma = \text{Cov}[X_1]$ and $N_1,\ldots,N_d$ are i.i.d. standard normal. In fact, uniform convergence holds:
\begin{equation} \label{uniformVnstar}
  \sup_{x \in \mathbb{R}}|\mathbb{P}^*(V_n^*(\omega) \le x)-F(x)| \rightarrow 0, \;  n \rightarrow \infty,\quad \text{for } \mathbb{P}\text{-almost all } \omega \in \Omega.
\end{equation}
where $F$ is the distribution function of $V$.
\end{theorem}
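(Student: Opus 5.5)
We fix $\omega$ and apply Lemma \ref{convVn} to the triangular array $X_{nk}:=\xi_{nk}(\omega)-\overline{X}_n(\omega)$, $1\le k\le n$, on the probability space $(\Omega^*,\mathcal{A}^*,\mathbb{P}^*)$. By (\ref{distribbootstrap}) these are $\mathbb{P}^*$-i.i.d. with law $Q_n(\omega)$ shifted by $-\overline{X}_n(\omega)$. Hence $\mathbb{E}^*[X_{n1}]=n^{-1}\sum_{i=1}^n X_i(\omega)-\overline{X}_n(\omega)=0$. The second moment $\mathbb{E}^*\|X_{n1}\|^2=n^{-1}\sum_i\|X_i(\omega)-\overline{X}_n(\omega)\|^2$ is finite, trivially, since $Q_n(\omega)$ has finite support. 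Moreover $V_n^*(\omega)=n\|n^{-1}\sum_k X_{nk}\|^2$ is exactly the statistic $V_n$ of the lemma. It remains to verify the two hypotheses of Lemma \ref{convVn} for all $\omega$ outside a single $\mathbb{P}$-null set.

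\emph{Covariance.} We have
$$\text{Cov}^*[X_{n1}]=n^{-1}\sum_{i=1}^n X_i(\omega)X_i(\omega)^\prime-\overline{X}_n(\omega)\overline{X}_n(\omega)^\prime.$$
By the strong law of large numbers, applied entrywise to $X_1X_1^\prime$ (integrable because $\mathbb{E}\|X_1\|^2<\infty$) and to $X_1$, this converges to $\mathbb{E}[X_1X_1^\prime]-0=\Gamma$ outside a null set $N_1$.

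\emph{Lindeberg condition.} This is the main step, and it is where the restriction to rational $\epsilon$ matters. For fixed rational $\epsilon>0$ we need
$$L_n(\epsilon)=n^{-1}\sum_{i=1}^n\|X_i-\overline{X}_n\|^2\,1_{\{\|X_i-\overline{X}_n\|>\sqrt n\epsilon\}}\to0 \quad \mathbb{P}\text{-a.s.}$$
On $N_1^c$ we have $\|\overline{X}_n\|\le 1$ for large $n$. Using $\|a-b\|^2\le 2\|a\|^2+2\|b\|^2$ and the fact that $\|X_i-\overline{X}_n\|>\sqrt n\epsilon$ forces $\|X_i\|>\sqrt n\epsilon-1$, we bound $L_n(\epsilon)$ by two terms. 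The first is
$$2n^{-1}\sum_i\|X_i\|^2 1_{\{\|X_i\|>\sqrt n\epsilon-1\}}.$$
The second is $2\|\overline{X}_n\|^2$ times a count fraction, and it tends to $0$ because $\overline{X}_n\to0$.

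For the first term, fix $M>0$. For $n$ large enough that $\sqrt n\epsilon-1\ge M$, it is dominated by
$$2n^{-1}\sum_i\|X_i\|^2 1_{\{\|X_i\|>M\}}\to 2\,\mathbb{E}[\|X_1\|^2 1_{\{\|X_1\|>M\}}]$$
almost surely, again by the SLLN. Taking $M$ along the integers and using dominated convergence as $M\to\infty$, the $\limsup$ is $0$ outside a null set $N_2$ that is independent of $\epsilon$. This gives the Lindeberg condition simultaneously for all rational $\epsilon$ on $(N_1\cup N_2)^c$. The countability of $\mathbb{Q}$, together with the integer values of $M$, is what lets us combine countably many null sets into one.

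Lemma \ref{convVn} then yields $V_n^*(\omega)\stackrel{\mathcal{D}}{\rightarrow}V$ for every $\omega\in(N_1\cup N_2)^c$, which is (\ref{bootstrapconv}). Since $F$ is continuous, Pólya's theorem gives the uniform statement (\ref{uniformVnstar}) for the same $\omega$.

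One technical point needs a check: if $\Gamma$ is singular, the limit is degenerate in some directions, but Lemma \ref{convVn} already covers this case. If $\Gamma=0$, then $F$ is a point mass and Pólya fails. In that case we rely on the paper's implicit assumption that $F$ is continuous, which holds whenever $\Gamma\neq0$.
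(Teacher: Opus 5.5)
Your proposal is correct and follows essentially the same route as the paper. You verify the hypotheses of Lemma \ref{convVn} for the centered bootstrap array: the covariance is handled by the SLLN, and the Lindeberg term is reduced, via eventual boundedness of $\|\overline{X}_n\|$, to $n^{-1}\sum_i\|X_i\|^2 1_{\{\|X_i\|>M\}}$ over countably many integers $M$, exactly as the paper does with its events $\Omega_0$ and $\Omega_m$. Your caveat about $\Gamma=0$ is unnecessary, because then $X_1=0$ a.s., so $V_n^*(\omega)=0=V$ for almost all $\omega$ and (\ref{uniformVnstar}) holds trivially.
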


\begin{proof} For each fixed $\omega \in \Omega$ the array $X_{ni}(\omega):=\xi_{ni}(\omega)- \overline{X}_n(\omega), 1 \le i \le n, n \in \mathbb{N},$ consists of random variables $X_{ni}(\omega)$, which are defined on $(\Omega^*,\mathcal{A}^*,\mathbb{P}^*)$ and which by (\ref{distribbootstrap}) are rowwise i.i.d. under $\mathbb{P}^*$. In the sequel we check that the array $(X_{ni}(\omega): 1 \le i \le n, n \in \mathbb{N})$ satisfies the remaining conditions of Lemma \ref{convVn} for $\mathbb{P}-$almost all $\omega \in \Omega.$ To start with
$$
   \mathbb{E}^*[\xi_{ni}(\omega)]=\int_{\Omega^*} \xi_{ni}(\omega)(\omega^*) \mathbb{P}^*(d\omega^*)= \int_{\mathbb{R}^p} x  Q_n(\omega)(dx)=n^{-1} \sum_{i=1}^n X_i(\omega) = \overline{X}_n(\omega),
$$
where the second equality holds by Change of Variable plus (\ref{distribbootstrap}) and the third equality follows from the definition (\ref{defQn}) of $Q_n(\omega)$. Since $\overline{X}_n(\omega)$ is a constant with respect to $\omega^* \in \Omega^*$, it follows that
$\mathbb{E}^*[X_{ni}(\omega)]=\overline{X}_n(\omega)-\overline{X}_n(\omega)=0.$ Similarly, $\mathbb{E}^*[||X_{n1}(\omega)||^2]=n^{-1} \sum_{i=1}^n ||X_i(\omega)-\overline{X}_n(\omega)||^2$, which as a finite sum of real numbers is finite. In the sequel we use the following notations:
If $x \in \mathbb{R}^d$ is a (column-) vector, then $x^{(k)}$ denotes its $k$-th component. If $M$ is a matrix, then $M_{kl}$ is its $(k,l)$-th entry. It follows that
\begin{eqnarray*}
& &(\text{Cov}^*[X_{n1}(\omega)])_{kl}= \text{Cov}^*[X_{n1}(\omega)^{(k)},X_{n1}(\omega)^{(l)}]=\text{Cov}^*[\xi_{n1}(\omega)^{(k)},\xi_{n1}(\omega)^{(l)}]\\
&=& E^*[\xi_{n1}(\omega)^{(k)} \xi_{n1}(\omega)^{(l)}]- E^*[\xi_{n1}(\omega)^{(k)}] E^*[\xi_{n1}(\omega)^{(l)}]\\
&=& n^{-1} \sum_{i=1}^n X_i(\omega)^{(k)}X_i(\omega)^{(l)}-\overline{X}_n(\omega)^{(k)}\overline{X}_n(\omega)^{(l)}
\rightarrow \Gamma_{kl} \quad \text{for } \mathbb{P}-\text{almost all } \omega \in \Omega
\end{eqnarray*}
by the Strong Law of Large Numbers (SLLN). Finally,
\begin{eqnarray} \label{Lindeberg}
0 &\le& L_n(\epsilon,\omega)= \mathbb{E}^*[||X_{n1}(\omega)||^2 1_{\{||X_{n1}(\omega)||> \epsilon \sqrt{n}\}}] \nonumber\\
&=& n^{-1}\sum_{i=1}^n ||X_i(\omega)-\overline{X}_n(\omega)||^2 1_{\{||X_i(\omega)-\overline{X}_n(\omega)||> \epsilon \sqrt{n}\}} \label{LB}\\
&\le& 2 n^{-1} \sum_{i=1}^n ||X_i(\omega)||^2 1_{\{||X_i(\omega)-\overline{X}_n(\omega)||> \epsilon \sqrt{n}\}}+2 ||\overline{X}_n(\omega)||^2, \label{LBbound}
\end{eqnarray}
where the last inequality follows from $||X_i(\omega)-\overline{X}_n(\omega)||^2 \le 2 (||X_i||^2+||\overline{X}_n(\omega)||^2)$. Since
$||\overline{X}_n(\omega)||^2 \rightarrow 0$ for $\mathbb{P}-\text{almost all } \omega \in \Omega$ by the SLLN and continuity of the norm, it remains to consider the first summand in (\ref{LBbound}) without the factor $2$, i.e.
$$
 \overline{L}_n(\epsilon,\omega):=n^{-1} \sum_{i=1}^n ||X_i(\omega)||^2 1_{\{||X_i(\omega)-\overline{X}_n(\omega)||> \epsilon \sqrt{n}\}} \ge 0.
$$
For that purpose notice that for every $m \in \mathbb{N}$ there exists a natural number $n_0=n_0(m)$ such that $\epsilon \sqrt{n} \ge m$ for all
$n \ge n_0$ and therefore
$$
 \overline{L}_n(\epsilon,\omega) \le n^{-1} \sum_{i=1}^n ||X_i(\omega)||^2 1_{\{||X_i(\omega)-\overline{X}_n(\omega)||> m\}} \quad \forall \; n \ge n_0 \; \forall\; m \in \mathbb{N} \; \forall \; \omega \in \Omega.
$$
Thus, if
$$
\Omega_0:= \liminf_{n \rightarrow \infty} \{||\overline{X}_n|| \le 1/2\}= \{||\overline{X}_n|| \le 1/2 \text{ for eventually all } n \in \mathbb{N}\},
$$
then $\mathbb{P}(\Omega_0)=1$ by the SLLN. (Here and in the sequel we omit the argument $\omega \in \Omega$.) Moreover, by $||X_i-\overline{X}_n||\le ||X_i||+ ||\overline{X}_n||$,
\begin{equation} \label{superset}
 \Omega_0 \subseteq \liminf_{n \rightarrow \infty} \{\overline{L}_n(\epsilon) \le n^{-1} \sum_{i=1}^n ||X_i||^2 1_{\{||X_i||> m/2\}}\}.
\end{equation}
Observe that the inclusion in (\ref{superset}) holds for all $m \in \mathbb{N}$, whence
$$
 \Omega_0 \subseteq \bigcap _{m \in \mathbb{N}} \liminf_{n \rightarrow \infty} \{\overline{L}_n(\epsilon) \le n^{-1} \sum_{i=1}^n ||X_i||^2 1_{\{||X_i||> m/2\}}\}.
$$
Consequently,
$$
 \Omega_0 \subseteq \{\limsup_{n \rightarrow \infty}\overline{L}_n(\epsilon) \le \limsup_{n \rightarrow \infty}n^{-1} \sum_{i=1}^n ||X_i||^2 1_{\{||X_i||> m/2\}} \; \forall \; m \in \mathbb{N}\}.
$$
Put
$$
  \Omega_m:=\{n^{-1} \sum_{i=1}^n ||X_i||^2 1_{\{||X_i||> m/2\}} \rightarrow \mathbb{E}[||X_1||^2 1_{\{||X_1||> m/2\}}], n \rightarrow \infty\}.
$$
Another application of the SLLN yields that $\Omega_\infty := \bigcap_{m \in \mathbb{N}} \Omega_m$ has probability $1$.
Let
$$
  E(m):= \mathbb{E}[||X_1||^2 1_{\{||X_1||> m/2\}}].
$$
Then
\begin{equation} \label{superset2}
 \Omega_0 \cap \Omega_\infty \subseteq \{ 0 \le \limsup_{n \rightarrow \infty} \overline{L}_n(\epsilon) \le E(m) \; \forall \; m \in \mathbb{N}\}.
\end{equation}
The Dominated Convergence Theorem ensures that $E(m) \rightarrow 0$ as $m \rightarrow \infty$. Thus taking the limit $m \rightarrow \infty$ in the event on the right side of (\ref{superset2}) shows that
$$
\{ 0 \le \limsup_{n \rightarrow \infty} \overline{L}_n(\epsilon) \le E(m) \; \forall \; m \in \mathbb{N}\} \subseteq \{\overline{L}_n(\epsilon) \rightarrow 0\}.
$$
Deduce from (\ref{superset2}) that $\overline{L}_n(\epsilon) \rightarrow 0 \; \mathbb{P}-$almost surely (a.s.), which in view of (\ref{LBbound}) results in
$L_n(\epsilon) \rightarrow 0$ a.s. for all $0<\epsilon \in \mathbb{Q}.$ Since $\mathbb{Q}$ is countable, we actually have
that $L_n(\epsilon) \rightarrow 0$ for all $0<\epsilon \in \mathbb{Q}$ a.s.
Now, Lemma \ref{convVn} yields the desired result (\ref{bootstrapconv}).
\end{proof}

As a consequence we obtain\\

\begin{corollary} \label{bootworks}(\textbf{Bootstrap works}) Under the assumptions of Theorem \ref{bootstrapfinite} the following uniform convergence holds:
$$
\sup_{x \in \mathbb{R}}|\mathbb{P}^*(V_n^*(\omega)\le x)-\mathbb{P}(V_n \le x)| \rightarrow 0,\; n \rightarrow \infty, \quad \text{for } \mathbb{P}\text{-almost all } \omega \in \Omega.
$$
\end{corollary}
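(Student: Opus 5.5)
The plan is a triangle inequality through the common limit distribution function $F$. For every $\omega$ and every $n$,
$$
\sup_{x \in \mathbb{R}}|\mathbb{P}^*(V_n^*(\omega)\le x)-\mathbb{P}(V_n \le x)| \le \sup_{x \in \mathbb{R}}|\mathbb{P}^*(V_n^*(\omega)\le x)-F(x)| + \sup_{x \in \mathbb{R}}|\mathbb{P}(V_n \le x)-F(x)|.
$$
I will show that each of the two terms on the right tends to zero, the first almost surely and the second deterministically.

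The first term tends to $0$ for $\mathbb{P}$-almost all $\omega$ by (\ref{uniformVnstar}) in Theorem \ref{bootstrapfinite}. That theorem already applies under exactly the present assumptions: i.i.d. $X_i$, $\mathbb{E}[X_1]=0$ and $\mathbb{E}[||X_1||^2]<\infty$.

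The second term does not depend on $\omega$. It tends to $0$ by (\ref{uniformVn}), which the paper obtained from the multivariate CLT (\ref{CLT}), the CMT giving (\ref{VCLT}), and P\'olya's theorem. P\'olya's theorem needs $F$ to be continuous. Here $F$ is the distribution function of $V=\sum_{i=1}^d\lambda_i N_i^2$ with $\Gamma=\text{Cov}[X_1]$, and $F$ is continuous provided at least one eigenvalue $\lambda_i$ is positive. If $\Gamma=0$, then $X_1=0$ a.s., so $V_n=0$ and $V_n^*(\omega)=0$ a.s. Both probabilities are then the same step function, and the statement holds trivially, so this degenerate case can be set aside first.

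The key point to check is that both uniform statements use the same $F$. This holds because the limit in Theorem \ref{bootstrapfinite} involves the eigenvalues of $\Gamma=\text{Cov}[X_1]$, the same covariance matrix appearing in (\ref{CLT}). Apart from that and the continuity caveat in the degenerate case, nothing is difficult. Intersecting the probability-one set from Theorem \ref{bootstrapfinite} with the deterministic convergence of the second term gives the corollary for $\mathbb{P}$-almost all $\omega$.
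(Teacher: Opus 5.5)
Your proposal is correct and is exactly the paper's argument: the triangle inequality through the common limit $F$, combining (\ref{uniformVnstar}) from Theorem \ref{bootstrapfinite} with the deterministic convergence (\ref{uniformVn}). Your separate treatment of the degenerate case $\Gamma=0$, where $F$ is not continuous, is extra care the paper omits; it does not change the argument.
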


\begin{proof}
The assertion follows from (\ref{uniformVnstar}) and (\ref{uniformVn}) in combination with the triangular inequality.
\end{proof}

Let $F_n$ be the distribution function of $V_n$ and $c_{n,\alpha}=F_n^{-1}(1-\alpha)$ be the exact $(1-\alpha)$-quantile, i.e.
$\mathbb{P}(V_n > c_{n,\alpha}) \le \alpha.$ By Corollary \ref{bootworks} it is reasonable to approximate $c_{n,\alpha}$ by
$c_{n,\alpha}^*(\omega) = (F_n^*(\omega))^{-1}(1-\alpha)$, where $F_n^*(\omega)$ is the distribution function of $V_n^*(\omega)$.
(These quantiles can be approximated with arbitrary accuracy by the Monte-Carlo method.) Thus given a level of significance $\alpha \in (0,1)$, our test rejects $H_0: \mu=0$, if
$V_n > c_{n,\alpha}^*$.\\

\begin{proposition} \label{levelalphatestdfinite} Let $X_i, i \in \mathbb{N},$ be i.i.d. random vectors in $\mathbb{R}^d$ with
$\mathbb{E}[X_1]=0$ and $\mathbb{E}[||X_1||^2] < \infty.$ Then for each $\alpha \in (0,1),$
$$
  \mathbb{P}( H_0 \text{ is rejected at level } \alpha )= \mathbb{P}(V_n > c_{n,\alpha}^*) \rightarrow \alpha.
$$
\end{proposition}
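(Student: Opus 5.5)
The plan is to show that the bootstrap critical value $c_{n,\alpha}^*$ converges $\mathbb{P}$-almost surely to the deterministic quantile $c_\alpha = F^{-1}(1-\alpha)$ of the limit law $V=\sum_{i=1}^d\lambda_iN_i^2$. Combined with (\ref{VCLT}), this gives the claim by a Slutsky-type argument. Throughout I assume $\Gamma\neq 0$, so that at least one $\lambda_i>0$. This is needed: if $\Gamma=0$, then $X_1=0$ a.s., so $V_n\equiv 0$ and the rejection probability is $0$, not $\alpha$. I would add this as a tacit nondegeneracy assumption.

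\textbf{Step 1 (properties of $F$).} For $\Gamma\neq0$, $V$ is a nontrivial positive combination of independent $\chi^2_1$ variables. Hence $F$ is continuous, $F(x)=0$ for $x\le 0$, and $V$ has a density that is strictly positive on $(0,\infty)$, being a convolution of gamma densities. So $F$ is strictly increasing on $[0,\infty)$. Consequently, for every $\alpha\in(0,1)$ the quantile $c_\alpha>0$ is the unique solution of $F(x)=1-\alpha$, and for every $\delta>0$,
$$F(c_\alpha-\delta)<1-\alpha<F(c_\alpha+\delta).$$

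\textbf{Step 2 (quantile convergence).} Fix $\omega$ in the probability-one set on which (\ref{uniformVnstar}) of Theorem \ref{bootstrapfinite} holds. Given $\delta>0$, put
$$\eta:=\min\{F(c_\alpha+\delta)-(1-\alpha),\,(1-\alpha)-F(c_\alpha-\delta)\}>0.$$
For $n$ large, $\sup_x|F_n^*(\omega)(x)-F(x)|<\eta$, which yields
$$F_n^*(\omega)(c_\alpha+\delta)>1-\alpha \quad\text{and}\quad F_n^*(\omega)(c_\alpha-\delta)<1-\alpha.$$
By the definition of the generalized inverse, $c_\alpha-\delta< c_{n,\alpha}^*(\omega)\le c_\alpha+\delta$. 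Hence $c_{n,\alpha}^*\to c_\alpha$ $\mathbb{P}$-a.s., and in particular in probability. This is the only step with real content, and I expect it to be the main obstacle: it needs uniqueness of the quantile, i.e.\ strict monotonicity of $F$ at $c_\alpha$, which is why Step 1 is required.

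\textbf{Step 3 (conclusion).} By (\ref{VCLT}) and Step 2, Slutsky's lemma gives $V_n-c_{n,\alpha}^*\stackrel{\mathcal{D}}{\rightarrow} V-c_\alpha$. The point $0$ is a continuity point of the distribution of $V-c_\alpha$, since $F$ is continuous. Therefore
$$\mathbb{P}(V_n>c_{n,\alpha}^*)=\mathbb{P}(V_n-c_{n,\alpha}^*>0)\to \mathbb{P}(V>c_\alpha)=1-F(c_\alpha)=\alpha.$$
Alternatively, and avoiding Slutsky, one can sandwich. On the event $\{|c_{n,\alpha}^*-c_\alpha|\le\delta\}$, whose probability tends to $1$, we have
$$\mathbb{P}(V_n>c_\alpha+\delta)-o(1)\le \mathbb{P}(V_n>c_{n,\alpha}^*)\le \mathbb{P}(V_n>c_\alpha-\delta)+o(1).$$
By (\ref{uniformVn}), the outer terms converge to $1-F(c_\alpha\pm\delta)$, and letting $\delta\downarrow0$ with continuity of $F$ gives the limit $\alpha$.
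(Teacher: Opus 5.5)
Your proposal is correct and follows the paper's route: almost sure convergence $c_{n,\alpha}^*\to c_\alpha$, then Slutsky with (\ref{VCLT}), then passage to the limit using continuity of $F$ at $c_\alpha$ (you use $0$ as a continuity point of $V-c_\alpha$, the paper uses Portmanteau with $(0,\infty)$ and $[0,\infty)$). The only differences are that you prove the quantile convergence directly from (\ref{uniformVnstar}) and strict monotonicity of $F$ rather than citing Shorack's Proposition 3.1, and that you make explicit the nondegeneracy assumption $\Gamma\neq 0$, which the paper uses tacitly when it asserts that $F$ and $F^{-1}$ are continuous.
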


\begin{proof}
Since $F^{-1}$ is continuous, it follows from (\ref{bootstrapconv}) and Proposition 3.1 in Shorack \cite{Shorack} that $c_{n,\alpha}^* \rightarrow c_\alpha$ almost surely. Therefore by (\ref{VCLT}) and Slutsky's theorem,
\begin{equation} \label{Vn-cnstar}
V_n-c_{n,\alpha}^* \stackrel{\mathcal{D}}{\rightarrow} V-c_\alpha.
\end{equation}

Infer with the Portmanteau-theorem that
\begin{eqnarray*}
\alpha &=& \mathbb{P}(V > c_\alpha)\\
&=&\mathbb{P}(V-c_\alpha \in (0,\infty))\\
&\le& \liminf_{n \rightarrow \infty}\mathbb{P}(V_n - c_{n,\alpha}^* \in (0,\infty)) \quad \text{ by } (\ref{Vn-cnstar}), \text{ because } (0,\infty) \text{ is open}\\
&=& \liminf_{n \rightarrow \infty}\mathbb{P}(V_n > c_{n,\alpha}^* )\\
&\le& \limsup_{n \rightarrow \infty}\mathbb{P}(V_n > c_{n,\alpha}^* )\\
&\le& \limsup_{n \rightarrow \infty}\mathbb{P}(V_n \ge c_{n,\alpha}^* )\\
&=& \limsup_{n \rightarrow \infty}\mathbb{P}(V_n - c_{n,\alpha}^* \in [0,\infty))\\
&\le& \mathbb{P}(V - c_\alpha \in [0,\infty)) \quad \quad \text{ by } (\ref{Vn-cnstar}), \text{ because } [0,\infty) \text{ is closed}\\
&=&\mathbb{P}(V \ge c_\alpha) = \mathbb{P}(V > c_\alpha), \quad \text{ because } F \text{ is continuous.}\\
&=&\alpha.
\end{eqnarray*}
\end{proof}

\section{The case $d$ tends to infinity}
So far, the dimension $d$ of our data is a fixed natural number and our test-statistic $V_n$ depends on it: $V_n=V_n(d)$.
Assume that $d=d_n$ tends to infinity: $d_n \rightarrow \infty.$ Then what can be said about the asymptotic behaviour of
$V_n(d_n)$?
Notice that we now observe random vectors $X_{n1},\ldots,X_{nn}$ with values in $\mathbb{R}^{d_n}$. This means that the sample space $\mathbb{R}^{d_n}$ varies with increasing sample sizes $n \in \mathbb{N}$. We overcome this problem by assuming that the $X_{ni}$ arise from a sequence $Z_i, i \in \mathbb{N},$ of i.i.d. random variables with values in the space $l^2$ of all square-summable real sequences as follows:
\begin{equation} \label{arisesequence}
 X_{ni}=(Z_i^{(1)},\ldots,Z_i^{(d_n)}), 1 \le i \le n, \; n \in \mathbb{N}.
\end{equation}
In this section, $||\cdot||_d$ is the euclidian norm on $\mathbb{R}^d$ and contrary to our terminology in the previous section $||\cdot||$
now denotes the $l^2-$norm. With that notation $V_n(d_n)=n ||\overline{X}_n||^2_{d_n}$, where $\overline{X}_n = n^{-1} \sum_{i=1}^n X_{ni}$.
Our next result gives an answer to the question asked above.\\

\begin{proposition} \label{highdimensions} Suppose $Z_i, i \in \mathbb{N},$ are i.i.d. random variables in $l^2$. If $\mathbb{E}[Z_1]=0, \;\mathbb{E}[||Z_1||^2]< \infty$ and $\Gamma := \text{Cov}[Z_1]$, then
\begin{equation} \label{Vnpn}
 \sup_{x \in \mathbb{R}}|\mathbb{P}(V_n(d_n)\le x) - F_\infty(x)| \rightarrow 0, \; n \rightarrow \infty.
\end{equation}
Here, $F_\infty$ is the distribution function of $V_\infty = \sum_{i=1}^\infty \lambda_i N_i^2$, where $\lambda_1,\lambda_2,\ldots$ are the eigenvalues of $\Gamma$ and the $N_i, i \in \mathbb{N},$ are i.i.d. standard normal random variables.
\end{proposition}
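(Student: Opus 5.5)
Our plan is to embed the truncated vectors back into $l^2$ and prove the limit law there. Let $P_d:l^2\to l^2$ denote the projection $P_d z=(z^{(1)},\ldots,z^{(d)},0,0,\ldots)$. Then $V_n(d_n)=\|S_n\|^2$, where
$$
S_n:=n^{-1/2}\sum_{i=1}^n P_{d_n}Z_i .
$$
It therefore suffices to show $S_n\stackrel{\mathcal{D}}{\rightarrow} G$ in $l^2$, where $G$ is a centered Gaussian element of $l^2$ with covariance operator $\Gamma$. Indeed, the Continuous Mapping Theorem then gives $V_n(d_n)\stackrel{\mathcal{D}}{\rightarrow}\|G\|^2$. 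The Karhunen--Lo\`eve expansion of $G$ (equivalently, diagonalizing the trace-class operator $\Gamma$) gives $\|G\|^2\stackrel{\mathcal{D}}{=}\sum_i\lambda_i N_i^2=V_\infty$. Since $F_\infty$ is continuous (at least one $\lambda_i>0$; the degenerate case $\Gamma=0$ is trivial), P\'olya's theorem upgrades this to the uniform statement (\ref{Vnpn}).

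For the weak convergence of $S_n$ we use the classical approximation scheme (Billingsley, Theorem 3.2 / 4.2): finite-dimensional convergence plus uniform negligibility of tails. Fix $k\in\mathbb{N}$. Since $d_n\to\infty$, we have $P_kP_{d_n}=P_k$ for large $n$. Hence $P_kS_n=n^{-1/2}\sum_{i}P_kZ_i$, and the ordinary multivariate CLT (\ref{CLT}) in $\mathbb{R}^k$ yields $P_kS_n\stackrel{\mathcal{D}}{\rightarrow}P_kG$. Also $P_kG\to G$ almost surely in $l^2$ as $k\to\infty$. It remains to control the remainder $(I-P_k)S_n$.

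The key estimate is the following. By independence, $\mathbb{E}[Z_1]=0$, and the fact that $I-P_k$ commutes with $P_{d_n}$,
$$
\mathbb{E}\|(I-P_k)S_n\|^2=\mathbb{E}\|(P_{d_n}-P_k)Z_1\|^2\le \mathbb{E}\|(I-P_k)Z_1\|^2=\sum_{j>k}\mathbb{E}[(Z_1^{(j)})^2].
$$
The right-hand side is independent of $n$ and tends to $0$ as $k\to\infty$ by $\mathbb{E}\|Z_1\|^2<\infty$ and dominated convergence. Chebyshev's inequality then gives
$$
\lim_k\limsup_n\mathbb{P}(\|S_n-P_kS_n\|>\epsilon)=0 .
$$
Together with the previous step, this yields $S_n\stackrel{\mathcal{D}}{\rightarrow}G$.

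The main obstacle, I expect, is the identification of the limit. One has to verify that $\|G\|^2$ indeed has the law of $\sum\lambda_iN_i^2$ with the $\lambda_i$ being the eigenvalues of the covariance operator $\Gamma$, including multiplicities and the trace-class property $\sum\lambda_i=\mathbb{E}\|Z_1\|^2<\infty$. I would handle this by taking an orthonormal eigenbasis $(e_i)$ of $\Gamma$ and writing $\|G\|^2=\sum_i\langle G,e_i\rangle^2$, where the $\langle G,e_i\rangle$ are independent $N(0,\lambda_i)$. A minor point is the continuity of $F_\infty$ needed for P\'olya's theorem: it holds because $V_\infty$ has a nondegenerate $\lambda_1N_1^2$ component that is independent of the rest.

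Alternatively, one could invoke the $l^2$ triangular-array CLT (Proposition \ref{CLTl2arrays}) announced in the introduction, applied to $X_{ni}=n^{-1/2}P_{d_n}Z_i$. The direct approach above avoids needing its exact hypotheses.
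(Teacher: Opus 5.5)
Your proof is correct, but it gets the weak convergence of $S_n=p_n(n^{-1/2}\sum_{i=1}^n Z_i)$ by a different route from the paper's.

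The paper first applies the i.i.d.\ central limit theorem in Hilbert space (Henze, Prop.~17.29) to the untruncated sums $n^{-1/2}\sum_{i=1}^n Z_i$. It then absorbs the growing dimension with a deterministic fact: $p_n$ converges continuously to the identity. The extended continuous mapping theorem (Billingsley, Thm.~5.5) then carries the limit over to $S_n$.

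You instead prove the convergence of $S_n$ directly with the approximation theorem. You combine the multivariate CLT for $P_kS_n$ (via $P_kP_{d_n}=P_k$ eventually) with the tail bound $\mathbb{E}\|(I-P_k)S_n\|^2\le\sum_{j>k}\mathbb{E}[(Z_1^{(j)})^2]$, which is uniform in $n$ and $d_n$. This is exactly the scheme the paper uses later for Proposition~\ref{CLTl2arrays}, feeding conditions (\ref{cond1}) and (\ref{cond2}) into Henze, Prop.~17.27. In effect you verify that proposition for the array $P_{d_n}Z_i$. Your closing remark should use $P_{d_n}Z_i$, not $n^{-1/2}P_{d_n}Z_i$, since the normalization is already built into $S_n$ there.

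The two approaches trade off as follows:
\begin{itemize}
\item The paper's argument is shorter and modular, and it cleanly separates the CLT from the dimension effect.
\item Yours is self-contained, needing only the finite-dimensional CLT and Chebyshev. The $n$-free tail bound also makes it transparent why no rate condition on $d_n$ is needed.
\end{itemize}

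You also handle the degenerate case $\Gamma=0$. There $F_\infty$ is not continuous, so the paper's appeal to P\'olya's theorem does not literally apply, although the claim is trivial in that case.

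One cosmetic slip: $(I-P_k)P_{d_n}$ equals $P_{d_n}-P_k$ only once $d_n\ge k$, and is $0$ before that. Your inequality holds in both cases, so nothing changes.
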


\begin{proof} For each natural number $n$ let $c_n:l^2 \rightarrow \mathbb{R}^{d_n}$ be defined by $c_n(x):=(x^{(1)},\ldots,x^{(d_n)})$ for $x=(x^{(i)}:i \in \mathbb{N}) \in l^2.$ Moreover, consider $r_n:\mathbb{R}^{d_n} \rightarrow l^2$ given by $r_n(x^{(1)},\ldots,x^{(d_n)}):=
(x^{(1)},\ldots,x^{(d_n)},0,0,\ldots)$ and the composition $p_n=r_n\circ c_n:l^2 \rightarrow l^2.$ One verifies easily that $c_n, r_n$ and $p_n$ are linear operators and that
\begin{equation} \label{norm}
 ||x||_{d_n}=||r_n(x)|| \quad \forall \; x \in \mathbb{R}^{d_n}.
\end{equation}
Notice that $X_{ni}=c_n(Z_i)$ and
therefore
\begin{equation} \label{embedding}
\overline{X}_n = n^{-1} \sum_{i=1}^n X_{ni} = n^{-1} \sum_{i=1}^n c_n(Z_i) = c_n(n^{-1} \sum_{i=1}^n Z_i)
\end{equation}
by linearity of $c_n$.
So, if $\overline{Z}_n:=n^{-1} \sum_{i=1}^n Z_i$, then using first (\ref{embedding}) and then (\ref{norm}) yields
\begin{equation}
 ||\overline{X}_n||_{d_n}=||c_n(\overline{Z}_n)||_{d_n}=||r_n(c_n(\overline{Z}_n))||=||p_n(\overline{Z}_n)||.
\end{equation}
It follows that
\begin{eqnarray} \label{rep}
V_n(d_n)&=& n||\overline{X}_n||_{d_n}^2 = n ||p_n(\overline{Z}_n)||^2 = ||n^{1/2} p_n(\overline{Z}_n)||^2= ||p_n(n^{1/2} \overline{Z}_n)||^2 \nonumber\\
&=& ||p_n(n^{-1/2} \sum_{i=1}^n Z_i)||^2
\end{eqnarray}
The CLT in Hilbert spaces (confer Proposition 17.29 in Henze \cite{Henze}) says that
\begin{equation} \label{CLTH}
 \frac{1}{\sqrt{n}} \sum_{i=1}^n Z_i \stackrel{\mathcal{D}}{\rightarrow} N(0,\Gamma) \quad \text{in } l^2.
\end{equation}
Now, the functions $p_n$ converge continuously to the identity map $id$. To see this assume that $x_n=(x_n^{(i)}: i \in \mathbb{N})$ converges to some $x=(x^{(i)}: i \in \mathbb{N})$ in $l^2$, i.e. $||x_n-x|| \rightarrow 0$. Observe that
$$
 ||p_n(x_n)-x||^2= \sum_{i=1}^\infty (p_n(x_n)^{(i)}-x^{(i)})^2= \sum_{i=1}^{d_n}(x_n^{(i)}-x^{(i)})^2 + \sum_{i>d_n} (x^{(i)})^2 \le ||x_n-x||^2+ \sum_{i>d_n} (x^{(i)})^2.
$$
Here, the first summand of the upper bound converges to zero by assumption on $(x_n)$. The second summand as the remainder term of
the convergent series $\sum_{i=1}^\infty (x^{(i)})^2$ also converges to zero, because $d_n \rightarrow \infty$.
 Consequently, $p_n(x_n) \rightarrow x$. Thus the CLT (\ref{CLTH}) and the Extended Continuous Mapping Theorem 5.5 of Billingsley \cite{Billingsley} yield that
$$
  p_n(n^{-1/2} \sum_{i=1}^n Z_i) \stackrel{\mathcal{D}}{\rightarrow} N(0,\Gamma) \quad \text{in } l^2.
$$
Infer from the representation (\ref{rep}) and the CMT that
$$
 V_n(d_n) \stackrel{\mathcal{D}}{\rightarrow} ||N(0,\Gamma)||^2.
$$
By Proposition 17.26 in Henze \cite{Henze}, $||N(0,\Gamma)||^2 \stackrel{\mathcal{D}}{=} \sum_{i=1}^\infty \lambda_i N_i^2 = V_\infty$. Since the distribution function $F_\infty$ of $V_\infty$ is continuous, the assertion (\ref{Vnpn}) follows from P\'{o}lya's theorem.
\end{proof}

Just as in the case of fixed dimension $d$ a CLT in $l^2$ for arrays is required. For its formulation and in particularly for its proof we need
the following linear operators: $\gamma_l:l^2 \rightarrow \mathbb{R}^l$ given by $\gamma_l(x)=(x^{(1)},\ldots,x^{(l)}), x=(x^{(i)}: i \in \mathbb{N}), \rho_l:\mathbb{R}^l \rightarrow l^2$ with $\rho(x^{(1)},\ldots,x^{(l)})=(x^{(1)},\ldots,x^{(l)},0,0,\ldots)$ and $\pi_l:= \rho_l\circ \gamma_l: l^2 \rightarrow l^2.$ The latter is the orthoprojector onto the linear hull of the first $l$ unit vectors. For $l=d_n$ we obtain the operators $c_n, r_n$ and $p_n$ from the above proof.\\

\begin{proposition} \label{CLTl2arrays} For every $n \in \mathbb{N}$ let $Z_{n1},\ldots,Z_{nn}$ i.i.d. random variables in $l^2$.
Assume that
\begin{itemize}
\item[(1)] $\mathbb{E}[Z_{n1}^{(i)}]=0$ and $\mathbb{E}[(Z_{n1}^{(i)})^2] < \infty \quad \forall \; i \in \mathbb{N} \; \forall \; n \in \mathbb{N}.$\\

\item[(2)] $\Gamma_n(k,l):= \mathbb{E}[Z_{n1}^{(k)} Z_{n1}^{(l)}] \rightarrow \Gamma(k,l) \in \mathbb{R}, \; n \rightarrow \infty \quad \forall \; k,l \in \mathbb{N}.$\\

\item[(3)] $\limsup_{n \rightarrow \infty} \sum_{k \ge 1} \Gamma_n(k,k) \le \sum_{k \ge 1} \Gamma(k,k) < \infty.$\\

\item[(4)] $\mathbb{E}[||\gamma_l(Z_{n1})||_l^2 1_{\{||\gamma_l(Z_{n1})||_l > \epsilon \sqrt{n}\}}] \rightarrow 0, \; n \rightarrow \infty \quad \text{for all rational } \epsilon>0.$\\
\end{itemize}
Then
\begin{equation} \label{CLTl2}
  \frac{1}{\sqrt{n}} \sum_{i=1}^n Z_{ni} \stackrel{\mathcal{D}}{\rightarrow} N(0,\Gamma) \quad \text{in } l^2,
\end{equation}
where $\Gamma= (\Gamma(k,l))_{k,l \in \mathbb{N}}.$
\end{proposition}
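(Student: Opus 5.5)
The plan is the standard Hilbert-space argument: finite-dimensional convergence plus a flat-concentration (tail) condition, which is exactly Billingsley's Theorem 3.2 (approximation theorem). Put $S_n:=n^{-1/2}\sum_{i=1}^n Z_{ni}$ and fix $l\in\mathbb{N}$.

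\textbf{Step 1: finite-dimensional projections.} The vectors $\gamma_l(Z_{n1}),\ldots,\gamma_l(Z_{nn})$ are i.i.d. in $\mathbb{R}^l$. By (1) they are centered with finite second moments, and by (2) their covariance matrices $(\Gamma_n(k,m))_{k,m\le l}$ converge to $\Gamma^{(l)}:=(\Gamma(k,m))_{k,m\le l}$. Condition (4) is the Lindeberg condition with rational $\epsilon$. The Lindeberg--Feller CLT for arrays, which underlies Lemma~\ref{convVn} and by the remark after it needs only rational $\epsilon$, therefore gives $\gamma_l(S_n)\stackrel{\mathcal{D}}{\rightarrow} N(0,\Gamma^{(l)})$ in $\mathbb{R}^l$. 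Since $\rho_l$ is continuous, the CMT yields $\pi_l(S_n)\stackrel{\mathcal{D}}{\rightarrow} Y_l:=\rho_l(N(0,\Gamma^{(l)}))$ in $l^2$.

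\textbf{Step 2: the limit object.} Next I check that $\Gamma$ is the covariance operator of a Gaussian element $Y$ of $l^2$. As a limit of positive semidefinite matrices, $\Gamma$ is positive semidefinite on every finite section. By (3), $\sum_k\Gamma(k,k)<\infty$, so $\Gamma$ is trace class. The sections are consistent, so $Y=N(0,\Gamma)$ exists in $l^2$, for instance as $\sum_j\sqrt{\lambda_j}N_je_j$ via the eigenexpansion. Moreover $\mathbb{E}\|Y-\pi_lY\|^2=\sum_{k>l}\Gamma(k,k)\to0$, so $Y_l\stackrel{\mathcal{D}}{=}\pi_lY\stackrel{\mathcal{D}}{\rightarrow} Y$ as $l\to\infty$.

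\textbf{Step 3: uniform negligibility of the tail (main obstacle).} It remains to show that for every $\delta>0$
$$\lim_{l\to\infty}\limsup_{n\to\infty}\mathbb{P}(\|S_n-\pi_lS_n\|>\delta)=0,$$
after which Billingsley's Theorem 3.2 gives (\ref{CLTl2}). By Chebyshev's inequality and independence with zero means,
$$\mathbb{P}(\|S_n-\pi_lS_n\|>\delta)\le\delta^{-2}\mathbb{E}\|S_n-\pi_lS_n\|^2=\delta^{-2}\sum_{k>l}\Gamma_n(k,k).$$
Here condition (3) is essential, because coordinatewise convergence in (2) alone does not control the tails. I would write
$$\sum_{k>l}\Gamma_n(k,k)=\sum_{k\ge1}\Gamma_n(k,k)-\sum_{k\le l}\Gamma_n(k,k).$$
Taking $\limsup_n$ and using (3) for the first term and (2) for the finitely many terms of the second gives
$$\limsup_{n\to\infty}\sum_{k>l}\Gamma_n(k,k)\le\sum_{k\ge1}\Gamma(k,k)-\sum_{k\le l}\Gamma(k,k)=\sum_{k>l}\Gamma(k,k),$$
which tends to $0$ as $l\to\infty$ since $\Gamma$ is trace class. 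This also shows that $\mathbb{E}\|Z_{n1}\|^2<\infty$ for large $n$, so all quantities are finite. Combining Steps 1–3 with Theorem 3.2 of Billingsley yields $S_n\stackrel{\mathcal{D}}{\rightarrow} N(0,\Gamma)$.
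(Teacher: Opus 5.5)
Your proposal is correct and follows essentially the same route as the paper. Both arguments bound the tail by Chebyshev and use (2) and (3) to get $\limsup_n\sum_{k>l}\Gamma_n(k,k)\le\sum_{k>l}\Gamma(k,k)$, then get $\pi_l(S_n)\stackrel{\mathcal{D}}{\rightarrow}\pi_l(N(0,\Gamma))$ from Lindeberg--Feller for $\gamma_l(S_n)$ plus the CMT with $\rho_l$, and finish with an approximation theorem. The only differences are that you cite Billingsley's Theorem 3.2 where the paper cites Proposition 17.27 in Henze (a special case of Wichura's result), and that you spell out the subtraction step and the verification $\pi_lY\stackrel{\mathcal{D}}{\rightarrow}Y$, which the paper only asserts or leaves implicit.
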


\begin{proof} We use Proposition 17.27 in Henze \cite{Henze}, which gives a sufficient condition for distributional convergence in separable
Hilbert spaces. It is a special case of Proposition 8.5.1 in G\"{a}nssler and Stute \cite{Stute}, which treats more generally separable metric spaces and originally goes back to Proposition 1 of Wichura \cite{Wichura}. Put $$S_n:=n^{-1/2}\sum_{i=1}^n Z_{ni}.$$ Then for all $\delta>0$:
\begin{eqnarray}
\mathbb{P}(||S_n-\pi_l(S_n)|| \ge \delta) &\le& \delta^{-2} \mathbb{E}[||S_n-\pi_l(S_n)||^2]=\delta^{-2} \mathbb{E}[\sum_{k > l} (S_n^{(k)})^2] \nonumber\\
&=& \delta^{-2} \sum_{k > l} \mathbb{E}[(S_n^{(k)})^2] = \delta^{-2} \sum_{k > l} \Gamma_n(k,k). \label{remainderofsum}
\end{eqnarray}
Infer from (3) and (2) that
$$
 \limsup_{n \rightarrow \infty} \sum_{k > l} \Gamma_n(k,k) \le \sum_{k > l} \Gamma(k,k) \quad \text{for all } l \in \mathbb{N}.
$$
Thus (\ref{remainderofsum}) yields
$$
 \limsup_{n \rightarrow \infty}\mathbb{P}(||S_n-\pi_l(S_n)|| \ge \delta) \le \delta^{-2} \sum_{k > l} \Gamma(k,k) \quad \text{for all } l \in \mathbb{N}.
$$
By assumption (3) the remainder of the series converges to zero as $l$ tends to infinity. Therefore we obtain that
\begin{equation} \label{cond1}
 \lim_{l \rightarrow \infty} \limsup_{n \rightarrow \infty}\mathbb{P}(||S_n-\pi_l(S_n)|| =0 \quad \text{for all positive } \delta.
\end{equation}
Furthermore, a routine application of the multivariate Lindeberg-Feller CLT gives
\begin{equation} \label{gammal}
\gamma_l(S_n) \stackrel{\mathcal{D}}{\rightarrow} \gamma_l(N(0,\Gamma)) \quad \text{in } \mathbb{R}^l \quad \text{for all } l \in \mathbb{N}.
\end{equation}
Notice that by Proposition 17.25 in Henze \cite{Henze} finiteness of trace$(\Gamma)$ as required in (3) ensures the existence of $N(0,\Gamma)$ in $l^2.$
Since $||\rho_l(x)- \rho_l(y)||=||\rho_l(x-y)||=||x-y||_l$, it follows that $\rho_l$ is continuous, whence (\ref{gammal}) and the CMT ensure that
\begin{equation} \label{cond2}
\pi_l(S_n) \stackrel{\mathcal{D}}{\rightarrow} \pi_l(N(0,\Gamma)) \quad \text{in } \mathbb{R}^l \quad \text{for all } l \in \mathbb{N}
\end{equation}
upon noticing that $\pi_l = \rho_l \circ \gamma_l$. By (\ref{cond1}) and (\ref{cond2}) we can apply Proposition 17.27 in Henze \cite{Henze}, which gives the desired result.
\end{proof}

Central limit theorems for triangular arrays of Banach space-valued random variables are well established in the literature (see Araujo and Gin\'{e} \cite{Araujo}). However, these results are typically formulated in terms of global norm conditions and abstract operator assumptions. In contrast, Proposition 3 exploits the specific structure of $l^2$
and replaces global conditions by componentwise second-moment assumptions, convergence of finite-dimensional covariances, and a trace-type condition.
We would like to emphasize that our CLT in $l^2$ represents a new result in that it cannot be readily derived from the general theory of Araujo and Gin\'{e} \cite{Araujo}.\\


\begin{corollary} \label{normZnquer} Let $\overline{Z}_n := n^{-1} \sum_{i=1}^n Z_{ni}$ and $W_n:=n||\overline{Z}_n||^2.$ Then under the assumption of the above Proposition \ref{CLTl2arrays},
\begin{equation} \label{Vnpn}
 \sup_{x \in \mathbb{R}}|\mathbb{P}(W_n \le x - F_\infty(x)| \rightarrow 0, \; n \rightarrow \infty.
\end{equation}
Here, $F_\infty$ is the distribution function of $V_\infty = \sum_{i=1}^\infty \lambda_i N_i^2$, where $\lambda_1,\lambda_2,\ldots$ are the eigenvalues of $\Gamma$ and the $N_i, i \in \mathbb{N},$ are i.i.d. standard normal.
\end{corollary}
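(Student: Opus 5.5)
The plan mirrors the end of the proof of Proposition \ref{highdimensions}, with Proposition \ref{CLTl2arrays} replacing the sequence CLT (\ref{CLTH}). First we write $W_n$ as a continuous functional of the normalized sum:
$$
W_n = n\|\overline{Z}_n\|^2 = \Big\| n^{-1/2}\sum_{i=1}^n Z_{ni}\Big\|^2 = \|S_n\|^2 .
$$
This uses only linearity and homogeneity of the norm in $l^2$. Under assumptions (1)--(4), Proposition \ref{CLTl2arrays} gives $S_n \stackrel{\mathcal{D}}{\rightarrow} N(0,\Gamma)$ in $l^2$. The map $x\mapsto \|x\|^2$ is continuous on $l^2$, so the CMT yields
$$
W_n \stackrel{\mathcal{D}}{\rightarrow} \|N(0,\Gamma)\|^2 \quad \text{in } \mathbb{R}.
$$

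Next we identify the limit. Condition (3) says that $\mathrm{trace}(\Gamma)=\sum_k \Gamma(k,k)<\infty$. As the limit of covariance matrices, $\Gamma$ is symmetric and positive semidefinite (entrywise limits of the finite sections $\Gamma_n$), so it is a trace-class covariance operator. Proposition 17.25 in Henze \cite{Henze} then guarantees that $N(0,\Gamma)$ exists, and Proposition 17.26 in Henze \cite{Henze} gives
$$
\|N(0,\Gamma)\|^2 \stackrel{\mathcal{D}}{=} \sum_{i\ge 1}\lambda_i N_i^2 = V_\infty ,
$$
where the $\lambda_i$ are the eigenvalues of $\Gamma$. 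Hence $W_n \stackrel{\mathcal{D}}{\rightarrow} V_\infty$.

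Finally we upgrade to uniform convergence with P\'olya's theorem, which requires $F_\infty$ to be continuous. This is the only point needing care. If $\Gamma\neq 0$, some $\lambda_{i_0}>0$. Then $V_\infty=\lambda_{i_0}N_{i_0}^2+R$, where $R$ is independent of $N_{i_0}$ and $\lambda_{i_0}N_{i_0}^2$ has a continuous distribution. The convolution of a continuous distribution with any distribution is continuous, so $F_\infty$ is continuous and P\'olya gives the supremum statement.

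The degenerate case $\Gamma=0$ is the main obstacle. There $V_\infty=0$ and $F_\infty$ has a jump at $0$, so the uniform statement can fail at $x=0$; for example, $\mathbb{P}(W_n\le 0)$ need not tend to $1$. I would therefore handle it by implicitly assuming $\Gamma\neq0$, as the paper tacitly does in Proposition \ref{highdimensions}. Alternatively, one can note that weak convergence $W_n \stackrel{\mathcal{D}}{\rightarrow} V_\infty$ still holds in that case and the supremum can only be taken over $x\neq 0$.
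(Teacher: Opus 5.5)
Your proof is correct and is the paper's argument: $W_n=\|S_n\|^2$, then Proposition \ref{CLTl2arrays} with the CMT, the identification $\|N(0,\Gamma)\|^2\stackrel{\mathcal{D}}{=}V_\infty$ via Henze, and P\'olya's theorem; your convolution argument supplies the continuity of $F_\infty$ that the paper only asserts. Your $\Gamma=0$ caveat is right, since for arrays the conclusion genuinely fails there (in Proposition \ref{highdimensions}, by contrast, $\Gamma=0$ forces $Z_1=0$ a.s. and is harmless), but your fallback of restricting the supremum to $x\neq0$ does not work, because $\sup_{x>0}\mathbb{P}(W_n>x)=\mathbb{P}(W_n>0)$ need not tend to $0$, so you should simply assume $\Gamma\neq0$.
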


\begin{proof} $W_n=||S_n||^2 \stackrel{\mathcal{D}}{\rightarrow} ||N(0,\Gamma)||^2$ by (\ref{CLTl2}) and the CMT. Since $||N(0,\Gamma)||^2 \stackrel{\mathcal{D}}{=} V_\infty$ and $F_\infty$ is continuous, the assertion follows from P\'{o}lya's theorem.
\end{proof}

\section{The bootstrap when d tends to infinity}
Recall that we observe $X_{n1},\ldots,X_{nn}$ in $\mathbb{R}^{d_n}$ with
$$
 X_{ni}= (X_{ni}^{(1)},\ldots,X_{nn}^{(d_n)}) = (Z_i^{(1)},\ldots,Z_i^{(d_n)}), \; 1 \le i \le n.
$$

With the random indices $i_{n1},\ldots,i_{nn}$ we introduce as in section 1 $\xi_{nk}(\omega):\Omega^* \rightarrow \mathbb{R}^{d_n}$ by
$$
 \xi_{nk}(\omega)(\omega^*):= X_{n i_{nk}(\omega^*)}(\omega)
$$
and with $\overline{X}_n(\omega)=n^{-1} \sum_{i=1}^n X_{ni}(\omega)$:
$$
 \overline{\xi}_n(\omega):= n^{-1} \sum_{k=1}^n (\xi_{nk}(\omega)-\overline{X}_n(\omega)).
$$
Then the bootstrap version of $V_n(d_n)$ is $$W_n^*(\omega):=n||\overline{\xi}_n(\omega)||_{d_n}^2.$$

\begin{theorem} \label{bootstraphigh} Suppose $Z_i, i \in \mathbb{N},$ are i.i.d. random variables with values in $l^2$ such that
$\mathbb{E}[Z_1]=0$ and  $\mathbb{E}[||Z_1||^2] < \infty$. If $\Gamma = \text{Cov}[Z_1]$, then
\begin{equation} \label{uniformWnstar}
  \sup_{x \in \mathbb{R}}|\mathbb{P}^*(W_n^*(\omega) \le x)-F_\infty(x)| \rightarrow 0, \;  n \rightarrow \infty,\quad \text{for } \mathbb{P}\text{-almost all } \omega \in \Omega.
\end{equation}
Here, the $\lambda_i, i \in \mathbb{N}$ are the eigenvalues of $\Gamma$ and $N_i, i \in \mathbb{N}$ are i.i.d. standard normal. Moreover,  $F_\infty$ is the distribution function of $V_\infty=\sum_{i=1}^\infty \lambda_i N_i^2.$
\end{theorem}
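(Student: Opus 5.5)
We work with a fixed $\omega$ and apply Corollary \ref{normZnquer} to the array of bootstrap variables lifted to $l^2$. For each $n$ and $\omega$ we set $Z_{nk}(\omega):=p_n(\zeta_{nk}(\omega)-\overline{Z}_n(\omega))$ for $1\le k\le n$, where $\zeta_{nk}(\omega)(\omega^*):=Z_{i_{nk}(\omega^*)}(\omega)$ and $\overline{Z}_n=n^{-1}\sum_{i=1}^n Z_i$. Because $c_n$ is linear and $\|r_n(x)\|=\|x\|_{d_n}$ by (\ref{norm}), we have $\xi_{nk}-\overline{X}_n=c_n(\zeta_{nk}-\overline{Z}_n)$. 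Consequently
\[
W_n^*(\omega)=n\Big\|n^{-1}\sum_{k=1}^n Z_{nk}(\omega)\Big\|^2 .
\]
Under $\mathbb{P}^*$ the $Z_{nk}(\omega)$ are rowwise i.i.d. by (\ref{distribbootstrap}). It therefore suffices to verify conditions (1)--(4) of Proposition \ref{CLTl2arrays} for $\mathbb{P}$-almost all $\omega$, with limit covariance $\Gamma=\text{Cov}[Z_1]$.

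Condition (1) holds trivially: the $\mathbb{P}^*$-mean is $0$ and every second moment is a finite sum. For (2), a computation as in the proof of Theorem \ref{bootstrapfinite} gives, for $k,l\le d_n$,
\[
\Gamma_n(k,l)=n^{-1}\sum_{i=1}^n Z_i^{(k)}Z_i^{(l)}-\overline{Z}_n^{(k)}\overline{Z}_n^{(l)},
\]
and $\Gamma_n(k,l)=0$ otherwise. Since $d_n\to\infty$, fixed $k,l$ eventually satisfy $k,l\le d_n$. The SLLN, applied to countably many pairs, then gives $\Gamma_n(k,l)\to\Gamma(k,l)$ almost surely. For (3) we bound
\[
\sum_k\Gamma_n(k,k)\le n^{-1}\sum_{i=1}^n\|Z_i\|^2\to\mathbb{E}\|Z_1\|^2=\sum_k\Gamma(k,k)<\infty
\]
almost surely by the real SLLN, which yields the required limsup inequality. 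For (4) we note $\|\gamma_l(Z_{n1})\|_l\le\|\zeta_{n1}-\overline{Z}_n\|$, and the map $t\mapsto t^21_{\{t>c\}}$ is monotone, so the $l$-dependent truncated moment is dominated by
\[
n^{-1}\sum_{i=1}^n\|Z_i-\overline{Z}_n\|^2 1_{\{\|Z_i-\overline{Z}_n\|>\epsilon\sqrt n\}}.
\]
This is exactly expression (\ref{LB}) with $\mathbb{R}^d$ replaced by $l^2$. The argument following (\ref{LB}) uses only the SLLN for $\overline{Z}_n$, which holds in the separable Hilbert space $l^2$ under $\mathbb{E}\|Z_1\|<\infty$, together with real SLLNs for $\|Z_i\|^21_{\{\|Z_i\|>m/2\}}$ and dominated convergence. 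That argument therefore carries over verbatim and shows the bound tends to $0$ almost surely for each rational $\epsilon$. Intersecting these countably many null-set complements gives a single event of probability one.

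The last step is to conclude. On that event Corollary \ref{normZnquer} applies under $\mathbb{P}^*$ and yields the uniform convergence (\ref{uniformWnstar}), where $F_\infty$ is continuous and Pólya's theorem is already built in.

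The main obstacle is the almost sure part of (2) and (3). Condition (2) involves infinitely many pairs $(k,l)$, so we must take one common null set; this works because $\mathbb{N}^2$ is countable. Condition (3) requires controlling the total trace rather than individual entries, and the pathwise bound by $n^{-1}\sum\|Z_i\|^2$ (dropping the nonnegative centering term and the truncation at $d_n$) is what makes it work without any rate condition on $d_n$. A secondary point to check is the SLLN in $l^2$ for $\overline{Z}_n$, needed in (4). If one prefers to avoid it, it follows componentwise together with the tail estimate from (3).
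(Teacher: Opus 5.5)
Your proof is correct and follows the paper's argument. It uses the same lifted array $p_n(\zeta_{nk}(\omega)-\overline{Z}_n(\omega))$, the same reduction to Corollary \ref{normZnquer}, and the same verification of (1)--(3) via the SLLN and the bound $\sum_k\Gamma_n(k,k)\le n^{-1}\sum_{i=1}^n\|Z_i\|^2$. The only deviation is in (4): the paper, for those $n$ with $d_n\ge l$, identifies the Lindeberg term with (\ref{LB}) for the $\mathbb{R}^l$-valued $\gamma_l(Z_i)$, so it needs only finite-dimensional SLLNs with one null set per $l$. You instead dominate the term uniformly in $l$ and $n$ by the $l^2$-version of (\ref{LB}), which costs the SLLN in $l^2$ (or your componentwise substitute) but gives a single null set for all $l$.
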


\begin{proof} For each $n \in \mathbb{N}$ let $$\zeta_{nk}(\omega)(\omega^*):=Z_{i_{nk}(\omega^*)}(\omega), \; 1 \le k \le n$$ and
$$
\overline{\zeta}_n(\omega):= \frac{1}{n} \sum_{k=1}^n (\zeta_{nk}(\omega)- \overline{Z}_n(\omega))
$$
with $\overline{Z}_n(\omega):=n^{-1} \sum_{i=1}^n Z_i(\omega).$ Then by linearity and (\ref{arisesequence}), $\overline{\xi}_n(\omega) = c_n(\overline{\zeta}_n(\omega))$ and
therefore $||\overline{\xi}_n(\omega)||_{d_n}=||c_n(\overline{\zeta}_n(\omega))||_{d_n}$, which by (\ref{norm}) results in
$$
 W_n^*(\omega)=n||p_n(\overline{\zeta}_n(\omega))||^2.
$$
Since $p_n(\zeta_n(\omega))=n^{-1} \sum_{i=1}^n p_n(\zeta_{ni}(\omega)-\overline{Z}_n(\omega))$, we want to apply Corollary \ref{normZnquer} to
the array $Z_{ni}(\omega):= p_n(\zeta_{ni}(\omega)-\overline{Z}_n(\omega)), 1 \le i \le n, n \in \mathbb{N}.$ For that purpose notice that
$\zeta_{n1}(\omega),\ldots,\zeta_{nn}(\omega)$ are i.i.d. under $\mathbb{P}^*$ with common distribution
$$
 Q_n(\omega)=\frac{1}{n} \sum_{i=1}^n \delta_{Z_i(\omega)},
$$
where $\delta_x$ is the Dirac-measure at point $x \in l^2.$ Thus, $Z_{n1}(\omega),\ldots,Z_{nn}(\omega)$ are i.i.d. under $\mathbb{P}^*$ for every $n \in \mathbb{N}.$ Furthermore,
$$
 \mathbb{E}^*[Z_{n1}(\omega)]=\mathbb{E}^*[p_n(\zeta_{n1}(\omega)-\overline{Z}_n(\omega))]
 =p_n(\mathbb{E}^*[\zeta_{n1}(\omega)-\overline{Z}_n(\omega)])=p_n(0)=0,
$$
because by there definitions the operators $\mathbb{E}^*$ and $p_n$ can be commuted and $\mathbb{E}^*[\zeta_{n1}(\omega)]= \overline{Z}_n(\omega).$
In particular
\begin{equation} \label{Zn1arecentered}
\mathbb{E}^*[Z_{n1}(\omega)^{(i)}] = 0 \quad \forall \; i,n \in \mathbb{N} \; \forall \; \omega \in \Omega.
\end{equation}
In the sequel notice that by definition
\begin{equation} \label{zerocomponents}
   Z_{n1}(\omega)^{(i)} = \zeta_{n1}(\omega)^{(i)}-\overline{Z}_n(\omega)^{(i)} \quad \forall \; i \le d_n \quad \text{and} \quad                           Z_{n1}(\omega)^{(i)} =0 \quad \forall \; i >d_n.
\end{equation}
So, if $i \le d_n$, then
$$
 \mathbb{E}^*[(Z_{n1}(\omega)^{(i)})^2]=\int_{l^2} (x^{(i)}-\overline{Z}_n(\omega)^{(i)})^2 Q_n(\omega)(dx)= n^{-1} \sum_{j=1 }^n (Z_j(\omega)^{(i)}-\overline{Z}_n(\omega)^{(i)})^2,
$$
i.e. the second moment is equal to the sample variance. With a view to (\ref{zerocomponents}) we obtain that
\begin{equation} \label{Zn1secondmoments}
\mathbb{E}^*[(Z_{n1}(\omega)^{(i)})^2] < \infty \quad \forall \; i,n \in \mathbb{N} \; \forall \; \omega \in \Omega.
\end{equation}
Similarly, for every fixed natural numbers $k,l \le d_n$ it follows that
\begin{eqnarray*}
 \Gamma_n(\omega)(k,l)&:=& \text{Cov}^*[Z_{n1}(\omega)^{(k)}, Z_{n1}(\omega)^{(l)}]=\text{Cov}^*[\zeta_{n1}(\omega)^{(k)}-\overline{Z}_n(\omega)^{(k)},\zeta_{n1}(\omega)^{(l)}-\overline{Z}_n(\omega)^{(l)}]\nonumber\\
 &=&\text{Cov}^*[\zeta_{n1}(\omega)^{(k)},\zeta_{n1}(\omega)^{(l)}]\nonumber\\
 &=& n^{-1} \sum_{i=1}^n Z_i(\omega)^{(k)} Z_i(\omega)^{(l)}- \overline{Z}_n(\omega)^{(k)} \overline{Z}_n(\omega)^{(l)}.
\end{eqnarray*}
Since $k,l \le d_n$ for eventually all $n \in \mathbb{N}$ as $d_n \rightarrow \infty$, the SLLN yields that
\begin{equation} \label{convergenceofthecovariances}
 \Gamma_n(\omega)(k,l) \rightarrow (\text{Cov}[Z_1])_{k,l} \quad \forall \; k,l \in \mathbb{N} \quad \text{for }\mathbb{P}-\text{almost all } \omega \in \Omega.
\end{equation}

Observe that by the Dominated Convergence Theorem,
\begin{eqnarray}
 \sum_{k \ge 1}\Gamma_n(\omega)(k,k)&=& \sum_{k \ge 1}\mathbb{E}^*[Z_{n1}(\omega)^2]= \mathbb{E}^*[||Z_{n1}(\omega)||^2]=\mathbb{E}^*[||p_n(\zeta_{n1}(\omega))-p_n(\overline{Z}_n(\omega)||^2] \nonumber\\
 &=& n^{-1}\sum_{i=1}^n ||p_n(Z_i(\omega))-p_n(\overline{Z}_n(\omega)||^2. \label{sumnormsquared}
\end{eqnarray}
Let $<\cdot,\cdot>$ denote the scalar product in $l^2$. Then
$$
||p_n(Z_i(\omega))-p_n(\overline{Z}_n(\omega)||^2 = ||p_n(Z_i(\omega))||^2 - 2<p_n(Z_i(\omega)),p_n(\overline{Z}_n(\omega)>+||p_n(\overline{Z}_n(\omega)||^2
$$
Thus (\ref{sumnormsquared}) and linearity of the scalar product yield
\begin{equation} \label{upperboundsumGamman}
 \sum_{k \ge 1}\Gamma_n(\omega)(k,k)= n^{-1}\sum_{i=1}^n ||p_n(Z_i(\omega))||^2-||p_n(\overline{Z}_n(\omega)||^2 \le  n^{-1}\sum_{i=1}^n ||Z_i(\omega))||^2,
\end{equation}
because $||p_n(x)|| \le ||x||$ for all $x \in l^2.$ By the SLLN the upper bound in (\ref{upperboundsumGamman}) converges to $\mathbb{E}[||Z_1||^2]$ for $\mathbb{P}-$almost every $\omega \in \Omega$. Since $\mathbb{E}[||Z_1||^2]=\sum_{k \ge 1}\Gamma(k,k)$ by the Monotone Convergence Theorem, we therefore obtain
\begin{equation} \label{tracecondition}
 \limsup_{n \rightarrow \infty}\sum_{k \ge 1}\Gamma_n(\omega)(k,k) \le \sum_{k \ge 1}\Gamma(k,k)<\infty \quad \text{for }\mathbb{P-}\text{almost all } \omega \in \Omega,
\end{equation}
where finiteness of the series follows from $\mathbb{E}[||Z_1||^2]< \infty$ by assumption. Finally, notice that
$\gamma_l(Z_{n1}(\omega))= \gamma_l(\zeta_{n1}(\omega))-\gamma_l(\overline{Z}_n(\omega)) \in \mathbb{R}^l$ for eventually all $n \in \mathbb{N}$.
For all these $n$ the Lindeberg-term is equal to
\begin{eqnarray*}
 L_n(\epsilon,\omega)&=&\mathbb{E}^*[||\gamma_l(Z_{n1}(\omega))||^2 1_{\{||\gamma_l(Z_{n1}(\omega))||> \sqrt{n} \epsilon\}}\\
  &=& n^{-1}\sum_{i=1}^n ||\gamma_l(Z_i(\omega))-\gamma_l(\overline{Z}_n(\omega))||^2 1_{||\gamma_l(Z_i(\omega))-\gamma_l(\overline{Z}_n(\omega))||>\sqrt{n} \epsilon\}}.
\end{eqnarray*}
This is equation (\ref{LB}) with $X_i$ there corresponds to $\gamma_l(Z_i)$ here. From this point (\ref{LB}) on, we can use the same line of reasoning to obtain
\begin{equation} \label{LBcondition}
 L_n(\epsilon,\omega) \rightarrow 0 \quad \text{for all rational } \epsilon >0 \quad \text{for }\mathbb{P}\text{-almost all } \omega \in \Omega.
\end{equation}

With (\ref{Zn1arecentered}), (\ref{Zn1secondmoments}), (\ref{convergenceofthecovariances}), (\ref{tracecondition}) and (\ref{LBcondition})
all requirements of Corollary \ref{normZnquer} are fulfilled for $\mathbb{P}$-almost all $\omega \in \Omega$, which yields the desired result.
\end{proof}

Proposition \ref{highdimensions} and Theorem \ref{bootstraphigh} with the triangular inequality yield:\\

\begin{corollary} \label{bootworkshigh}(\textbf{Bootstrap works in high dimensions}) Under the assumptions of Theorem \ref{bootstraphigh} the following uniform convergence holds:
$$
\sup_{x \in \mathbb{R}}|\mathbb{P}^*(W_n^*(\omega)\le x)-\mathbb{P}(V_n(d_n) \le x)| \rightarrow 0,\; n \rightarrow \infty, \quad \text{for } \mathbb{P}\text{-almost all } \omega \in \Omega.
$$
\end{corollary}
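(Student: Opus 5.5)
The plan is to insert $F_\infty$ as an intermediate comparison and use the triangle inequality, just as in Corollary \ref{bootworks}. For every $\omega\in\Omega$ and $n\in\mathbb{N}$,
\begin{equation*}
\sup_{x \in \mathbb{R}}|\mathbb{P}^*(W_n^*(\omega)\le x)-\mathbb{P}(V_n(d_n) \le x)| \le \sup_{x \in \mathbb{R}}|\mathbb{P}^*(W_n^*(\omega)\le x)-F_\infty(x)| + \sup_{x \in \mathbb{R}}|F_\infty(x)-\mathbb{P}(V_n(d_n) \le x)|.
\end{equation*}
Both suprema involve distribution functions with values in $[0,1]$, so they are finite.

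The second summand is deterministic. It tends to zero by Proposition \ref{highdimensions}, applied under the present hypotheses: the $Z_i$ are i.i.d. in $l^2$ with $\mathbb{E}[Z_1]=0$ and $\mathbb{E}[||Z_1||^2]<\infty$, and the $X_{ni}$ arise from them via (\ref{arisesequence}). These are exactly the assumptions of Theorem \ref{bootstraphigh}. Proposition \ref{highdimensions} and Theorem \ref{bootstraphigh} also refer to the same $F_\infty$, because both use $\Gamma=\text{Cov}[Z_1]$ and hence the same eigenvalues $\lambda_i$.

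For the first summand, Theorem \ref{bootstraphigh} gives an event $\Omega_1$ with $\mathbb{P}(\Omega_1)=1$ on which this supremum tends to zero. Hence for every $\omega\in\Omega_1$ the right-hand side tends to zero, and so does the left-hand side. This is the assertion for $\mathbb{P}$-almost all $\omega$.

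No step is a genuine obstacle; all the real work sits in Proposition \ref{highdimensions} and Theorem \ref{bootstraphigh}. The only thing to check is that the limit distribution functions in the two results coincide, which holds because both are built from $\text{Cov}[Z_1]$.
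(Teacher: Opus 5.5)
Your proposal is correct and matches the paper's argument: the paper likewise inserts $F_\infty$ and combines Proposition \ref{highdimensions} with Theorem \ref{bootstraphigh} via the triangle inequality. Your extra check that both results use the same $F_\infty$, built from $\Gamma=\text{Cov}[Z_1]$, is a harmless and sensible addition.
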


Let $G_n$ be the distribution function of $V_n(d_n)$ and $d_{n,\alpha}=G_n^{-1}(1-\alpha)$ be the exact $(1-\alpha)$-quantile, i.e.
$\mathbb{P}(V_n(d_n) > d_{n,\alpha}) \le \alpha.$ By Corollary \ref{bootworkshigh} it is reasonable to approximate $d_{n,\alpha}$ by
$d_{n,\alpha}^*(\omega) = (G_n^*(\omega))^{-1}(1-\alpha)$, where $G_n^*(\omega)$ is the distribution function of $W_n^*(\omega)$.
These quantiles can be approximated with arbitrary accuracy by the Monte-Carlo method. Thus given a level of significance $\alpha \in (0,1)$, our test rejects $H_0: \mu=0$, if $V_n(d_n)$ exceeds the critical value $d_{n,\alpha}^*$.\\

\begin{proposition} \label{levelalphatestdinfinite} Let $Z_i, i \in \mathbb{N},$ be i.i.d. random variables in $l^2$ with
$\mathbb{E}[X_1]=0$ and $\mathbb{E}[||X_1||^2] < \infty.$ Then for each $\alpha \in (0,1),$
$$
  \mathbb{P}( H_0 \text{ is rejected at level } \alpha )= \mathbb{P}(V_n(d_n) > d_{n,\alpha}^*) \rightarrow \alpha.
$$
\end{proposition}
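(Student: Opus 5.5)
The plan mirrors the proof of Proposition \ref{levelalphatestdfinite}, with Proposition \ref{highdimensions} and Theorem \ref{bootstraphigh} taking the roles of (\ref{VCLT}) and (\ref{bootstrapconv}). (In the statement, $X_1$ should be read as $Z_1$.) Under $H_0$ the hypotheses of both results hold. Proposition \ref{highdimensions} gives $V_n(d_n)\stackrel{\mathcal{D}}{\rightarrow} V_\infty$. Theorem \ref{bootstraphigh} gives, for $\mathbb{P}$-almost all $\omega$, that $W_n^*(\omega)\stackrel{\mathcal{D}}{\rightarrow} V_\infty$ under $\mathbb{P}^*$. Let $d_\alpha:=F_\infty^{-1}(1-\alpha)$ be the $(1-\alpha)$-quantile of $V_\infty$.

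The first step is to show $d_{n,\alpha}^*(\omega)\rightarrow d_\alpha$ almost surely. As in the finite case, I would invoke Proposition 3.1 of Shorack \cite{Shorack}. Distributional convergence of $G_n^*(\omega)$ to $F_\infty$ gives convergence of the quantile functions at every continuity point of $F_\infty^{-1}$. So I need $F_\infty^{-1}$ to be continuous at $1-\alpha$, which means $F_\infty$ must be strictly increasing near $d_\alpha$. This is the main point needing care.

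I expect this step to be the main obstacle. If $\Gamma\neq 0$, some $\lambda_1>0$, and $V_\infty=\lambda_1N_1^2+R$ with $R\ge 0$ independent of $N_1$. Then $V_\infty$ has a density that is positive on $(0,\infty)$, because it is the convolution of a positive density on $(0,\infty)$ with a law on $[0,\infty)$. Hence $F_\infty$ is continuous and strictly increasing on $[0,\infty)$, and $F_\infty^{-1}$ is continuous on $(0,1)$. The degenerate case $\Gamma=0$ means $Z_1=0$ a.s. Then $V_n(d_n)\equiv 0$ and the test never rejects, so it must be excluded or treated as trivial, exactly as is implicitly done in the finite-dimensional case. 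Alternatively one can avoid quantile convergence by a direct sandwich: by the Glivenko–Cantelli-type uniform convergence (\ref{uniformWnstar}), for any $\eta>0$ eventually $F_\infty^{-1}(1-\alpha-\eta)\le d^*_{n,\alpha}\le F_\infty^{-1}(1-\alpha+\eta)$. Letting $\eta\downarrow 0$ and using continuity of $F_\infty^{-1}$ then gives the same conclusion.

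Next, since $d_{n,\alpha}^*\rightarrow d_\alpha$ a.s. and $d_\alpha$ is a constant, Slutsky's theorem gives $V_n(d_n)-d_{n,\alpha}^*\stackrel{\mathcal{D}}{\rightarrow} V_\infty-d_\alpha$. Finally I would repeat verbatim the Portmanteau chain from the proof of Proposition \ref{levelalphatestdfinite}. The open set $(0,\infty)$ gives the lower bound $\alpha\le\liminf_n \mathbb{P}(V_n(d_n)>d^*_{n,\alpha})$. The closed set $[0,\infty)$ gives the upper bound $\limsup_n \mathbb{P}(V_n(d_n)>d^*_{n,\alpha})\le \mathbb{P}(V_\infty\ge d_\alpha)$. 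Continuity of $F_\infty$ (noted in Proposition \ref{highdimensions}) gives $\mathbb{P}(V_\infty\ge d_\alpha)=\mathbb{P}(V_\infty>d_\alpha)=\alpha$. Together these yield $\mathbb{P}(V_n(d_n)>d^*_{n,\alpha})\rightarrow\alpha$.
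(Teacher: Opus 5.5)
Your proposal is correct and is essentially the paper's own proof. Like the paper, you get almost sure convergence $d_{n,\alpha}^*\to d_\alpha$ from Theorem~\ref{bootstraphigh} and Shorack's Proposition~3.1, then apply Slutsky together with Proposition~\ref{highdimensions}, then rerun the Portmanteau sandwich of Proposition~\ref{levelalphatestdfinite}. Two of your points go beyond the paper and are worth keeping: you actually justify the continuity of $F_\infty^{-1}$, which the paper only asserts, and you note that the degenerate case $\Gamma=0$ must be excluded, since there the rejection probability is $0$. One small precision: the density is positive on all of $(0,\infty)$ because $R=\sum_{i\ge 2}\lambda_i N_i^2$ charges every neighbourhood of $0$, not merely because $R\ge 0$. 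Continuity of $F_\infty^{-1}$ on $(0,1)$ would hold even without this.
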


\begin{proof} Since $F_\infty^{-1}$ is continuous, Theorem \ref{bootstraphigh} and Proposition 3.1 in Shorack guarantee that $d_{n,\alpha}^* \rightarrow d_\alpha := F_\infty^{-1}(1-\alpha)$ almost surely. Conclude with Proposition \ref{highdimensions} and Slutsky's theorem that
$V_n(d_n)- d_{n,\alpha}^*$ converge in distribution to $V_\infty-d_\alpha.$ Now, the assertion follows in the same manner as in the proof of
Proposition \ref{levelalphatestdfinite}.
\end{proof}





\end{document}